\documentclass[11pt]{article}
\usepackage{latexsym,euscript,amsmath,amssymb,amsbsy,amsfonts,amsthm,amsopn,amstext,amsxtra,amscd}
\usepackage{epsfig}
\usepackage{graphics}
\usepackage{url}

\usepackage[english]{babel}

\theoremstyle{plain}
\newtheorem{theorem}{Theorem}

\theoremstyle{definition}

\newtheorem{point}{}

\newcommand{\N}{{\mathbb N}}

\newcommand{\Q}{{\mathbb Q}}

\newcommand{\II}{{\mathbb I}}
\newcommand{\Z}{{\mathbb Z}}

\newcommand{\D}{\text{$\mathcal{D}$}}

\def\T{T_{\boldsymbol{\pi}}}

\begin{document}
\centerline{\bf On one type of ud - preserving mapping}
$$ $$
\centerline{Milan Pa\v st\'eka}
$$ $$
{\bf Abstract.}{\it In this paper, we construct a class of mappings on the unit interval. These mappings preserve uniform distribution, and their iterations form a sequence that is Buck uniformly distributed. In the third part, we prove some properties of these mappings.}
\footnote{M a t h e m a t i c s S u b j e c t C l a s s i f i c a t i o n: Primary: 11B05; Secondary: 11J83.\newline K e y w o r d s: density, uniform distribution, reminder class, dynamical systems. \newline Research supported by the Grant VEGA 2/0119/23.}
\newline
{\bf Notation} \newline
$\N$  - the set of natural numbers. \newline
$\Q$ - the set of rational numberrs \newline
$r+(m)=\{n \in \N; n \equiv r \pmod{m} \}$. \newline
$\Z_m=\{0, \dots, m-1\}$. \newline
$v^{-1}(S)= \{n \in \N; v(n) \in S\}$. \newline
$|I|$ - length of interval $I$. \newline
$\pi^n = \pi\circ \pi \circ \dots \circ \pi$ - $n$ times composition of permutation $\pi: \Z_m \to \Z_m$, \newline
$T^n=T\circ T\circ \dots \circ T$ - $n$- times. \newline

\subsection{Introduction}
The motivation of the present paper is the results of \v S. Porubsk\'y,
O. Strauch and T. \v Sal\'at published in 1988 in the paper \cite{pss}.
The uniform distribution of sequences was first defined in 1916  by Hermann Weyl in the famous paper \cite{wey}.
 Later, this research was developed in several ways (see \cite{D-T}, \cite{SP}, \cite{K-N}). In the paper \cite{pss} mentioned above, the authors study the mappings $[0,1]\to [0,1]$ preserving uniform distribution, see below.

Weyl's concept of uniform distribution can be considered as ud with respect to asymptotic density. We will focus on a specific type of uniform distribution, namely, on uniform distribution with respect to the Buck measure density.

In 1946, R. C. Buck introduced "measure density" in the paper \cite{BUC}. Uniform distribution with respect to measure density is called Buck's uniform distribution (see \cite{p2})
Let $S \subset \N$. The value
$$
\mu^\ast(S)=
\inf \Big\{\sum_{j=1}^k \frac{1}{D_j}; S \subset \bigcup_{j=1}^k r_j+(D_j)\Big\}
$$
is called {\it Buck's measure density} of $S$. \newline
The system of sets
$\D_\mu=\{S \subset \N; \mu^\ast(S)+ \mu^\ast(\N \setminus S)=1 \}$ is an algebra of sets and its elements are called
{\it Buck measurable sets}. Clearly
\begin{equation}
\label{remind}
r+(m)\in \D_\mu \ \land \ \mu(r+(m))=\frac{1}{m}, r \in \Z, m\in \N.
\end{equation}
A sequence $\{v(n)\}, v(n)\in [0,1]$ is called {\it Buck uniformly distributed modulo $1$} if for each subinterval
$I \subset [0.1]$ there holds $v^{-1}(I)\in \D_\mu$ and
$\mu(v^{-1}(I)) = |I|$,
(see  \cite{p2}, \cite{p3}, \cite{p4}, \cite{P-T1}).

\subsection{Mapping}
Let $m_i, i=1,2, \dots$ be a sequence of relatively primes natural numbers. Put $B_0=1$ and
$$
B_j=\prod_{i=1}^j m_i.
$$
Put $\Z_m=\{0, \dots, m-1$ for given $m \in \N$.
Each real number $\alpha \in [0,1)$ can be expressed uniquely in the form of Cantor series
\begin{equation}
\label{cantor}
\alpha = \sum_{j=0}^\infty \frac{b_j(\alpha)}{B_{j+1}}, \ b_j(\alpha)\in \Z_{m_{j+1}}
\end{equation}
under the condition that: \newline
{\bf (c)} for infinitely many $j$ there holds
$b_j(\alpha)< m_{j+1}-1$. \newline
The equality
\begin{equation}
\label{expansion}
\sum_{j=0}^r \frac{b_j}{B_{j+1}}+\sum_{j=r+1}^\infty \frac{m_{j+1}-1}{B_{j+1}}= \sum_{j=0}^{r-1} \frac{b_j}{B_{j+1}} +\frac{b_r+1}{B_{r+1}}.
\end{equation}
provides that each estimation (\ref{cantor}) can be rewritten to a form which fulfills {\bf (c)}.
The proof of this equality is an easy calculation that takes into account $m_{j+1}=\frac{B_{j+1}}{B_j}$.
\begin{point}
\label{unicity} If $\alpha$ is an irrational number, then each of expansions of $\alpha$ in the form (\ref{cantor}) fulfills the condition (c).
\end{point}

Consider moreover a sequence of permutations $\pi_n : \Z_{m_n}\to \Z_{m_n},$ for $n=1,2,3, \dots$. Let us consider a "vector" of permutations
$$
\boldsymbol{\pi}=(\pi_1, \pi_2, \pi_3, \dots ).
$$
This allows define a mapping
$$
T_{\boldsymbol{\pi}}: [0,1] \to [0,1],
$$
where
$$
T_{\boldsymbol{\pi}}(\alpha)= \sum_{j=0}^\infty \frac{\pi_j(b_j(\alpha))}{B_{j+1}}.
$$
We get immediately
\begin{point} The mapping $T_{\boldsymbol{\pi}}$ is uniformly continuous on the interval $[0,1]$.
\end{point}

Now we formulate the main result. Denote
\begin{equation}
\label{suma}
T^{(n)}_{\boldsymbol{\pi}}(\alpha)=\sum_{j=0}^\infty \frac{\pi_j^n(b_j(\alpha))}{B_{j+1}}.
\end{equation}
\begin{theorem}
\label{iter} Suppose that the permutations $\pi_i$ are cyclic of length $m_i$. Let $\alpha \in [0.1)$, then the sequence of iterations $\{v(n)\}$, where $v(n)=\T^n(\alpha)$ is Buck's uniformly distributed modulo $1$.
\end{theorem}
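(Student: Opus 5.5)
Since $\T^n=T^{(n)}_{\boldsymbol{\pi}}$ (the digit map acts coordinatewise), we have $v(n)=\sum_j \pi_{j+1}^n(b_j(\alpha))/B_{j+1}$. Here I read the indexing so that the permutation on $\Z_{m_{j+1}}$ acts on the digit $b_j$. A side issue is that the image digits may end in $m_{j+1}-1$ forever; this only moves $v(n)$ to a point with a different expansion via (\ref{expansion}) and does not affect the argument below.

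The plan is to work first with the elementary intervals $I_{k,a}=[a/B_k,(a+1)/B_k)$. Their membership is decided by the first $k$ digits, and then we pass to arbitrary intervals by approximation. Write $a=\sum_{j<k} c_j B_k/B_{j+1}$ with $c_j\in\Z_{m_{j+1}}$. Then $v(n)\in I_{k,a}$ exactly when $\pi_{j+1}^n(b_j(\alpha))=c_j$ for all $j<k$, up to the boundary issue from (\ref{expansion}), which can move a point only onto an endpoint.

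Since $\pi_{j+1}$ is a cycle of length $m_{j+1}$, the map $n\mapsto \pi_{j+1}^n(b_j)$ is periodic with exact period $m_{j+1}$ and takes each value once per period. Hence the condition $\pi_{j+1}^n(b_j)=c_j$ holds exactly for $n$ in a single residue class $r_j+(m_{j+1})$. The $m_i$ are pairwise coprime, so by the Chinese remainder theorem the intersection over $j<k$ is one residue class $r+(B_k)$. By (\ref{remind}), $v^{-1}(I_{k,a})$ is then Buck measurable with measure $1/B_k=|I_{k,a}|$. Finite unions of such intervals are handled by additivity of $\mu$ on the algebra $\D_\mu$.

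For an arbitrary interval $I\subset[0,1]$ and fixed $k$, take $U_k$ to be the union of the elementary intervals of level $k$ contained in $I$, and $V_k$ the union of those meeting $\overline I$. Then $|V_k|-|U_k|\le 2/B_k\to 0$, which uses $m_i\ge 2$ so that $B_k\to\infty$. We have $v^{-1}(U_k)\subset v^{-1}(I)\subset v^{-1}(V_k)$, where the closed-endpoint versions of the elementary intervals are again covered by residue classes up to the endpoint correction. This gives $\mu^*(v^{-1}(I))\le |I|+2/B_k$ and $\mu^*(\N\setminus v^{-1}(I))\le 1-|I|+2/B_k$. The sum of the two outer measures is at least $1$ by subadditivity, so the squeeze shows $v^{-1}(I)\in\D_\mu$ with $\mu=|I|$.

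The main obstacle is the boundary bookkeeping. Because the digitwise image may violate condition (c), membership of $v(n)$ in a half-open elementary interval is not exactly the digit condition; it can fail for $v(n)$ equal to an endpoint. I would avoid this by using only the inclusions needed for the squeeze. The digit condition forces $v(n)\in\overline{I_{k,a}}$, which yields the upper bound for $v^{-1}(I)$ through the closures in $V_k$. For the lower bound, only elementary intervals whose closure lies in the interior of $I$ are used, or $U_k$ is shrunk by one level-$k$ cell at each end; this costs only an extra $O(1/B_k)$.
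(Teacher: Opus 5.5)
\textbf{Genuine gap: the identity $\T^n=T^{(n)}_{\boldsymbol{\pi}}$.} Your first sentence is false in general, and the renormalization you call a side issue does affect the argument. $\T$ reads the digits of its argument in the normalized form (c). So once some image sequence $\pi^{n}_{j+1}(b_j(\alpha))$ is eventually equal to $m_{j+1}-1$, the next application of $\T$ acts on the renormalized expansion from (\ref{expansion}). From then on $v(n)$ is no longer $\sum_j \pi^n_{j+1}(b_j(\alpha))/B_{j+1}$. This changes the orbit itself, not just how it is written.

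With genuine iterates the statement actually fails. Take $m_1,m_2,\dots$ to be the primes $2,3,5,\dots$, let $\pi_i(x)=x-1 \bmod m_i$ (a cycle of length $m_i$), and let $\alpha=\frac12$, whose digits are $(1,0,0,\dots)$. The image digits are $(0,m_2-1,m_3-1,\dots)$. Hence $\T(\frac12)=\sum_{j\ge1}\bigl(\frac{1}{B_j}-\frac{1}{B_{j+1}}\bigr)=\frac{1}{B_1}=\frac12$. The orbit is constant, and $v^{-1}([0,\frac12))=\emptyset$ has measure $0\neq\frac12$.

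What your argument really proves is that $T^{(n)}_{\boldsymbol{\pi}}(\alpha)$ is Buck uniformly distributed for every $\alpha$. It gives the theorem for $\T^n(\alpha)$ only when $\alpha$ avoids the countable set on which some $\pi^n(b(\alpha))$ is eventually maximal, for instance when $\alpha\in\II$. The paper's proof makes the same tacit identification: it reads the digits of $v(n)$ as $\pi_i^n(b_i(\alpha))$ and only remarks that the two maps agree off a countable set. You should state the result for $T^{(n)}_{\boldsymbol{\pi}}$, or restrict $\alpha$ as above.

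\textbf{The rest of the argument.} Otherwise your route is the paper's. The Chinese remainder theorem turns each level-$k$ digit pattern into a single class $R_a=r+(B_k)$, and you then pass to general intervals. Your handling of non-(c) expansions is a valid and slightly cleaner variant. The $R_a$ partition $\N$, and $n\in R_a$ forces $v(n)\in\overline{I_{k,a}}$, so $\bigcup\{R_a:\overline{I_{k,a}}\subset I\}\subset v^{-1}(I)\subset\bigcup\{R_a:\overline{I_{k,a}}\cap I\neq\emptyset\}$. This squeezes with $O(1/B_k)$ slack, so you do not need the exceptional set $S$ or \textbf{\ref{zerod}}, \textbf{\ref{krit1}} and \textbf{\ref{krit2}}. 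For the upper bound, index by the cells whose closure meets $I$, not the cells meeting $\overline I$. The latter can miss one class, for example when $I=[c,d]$ and $c$ is the right endpoint of a cell.
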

Let us remark that with except of a countable set of $\alpha$
the value $\T^{(n)}(\alpha)$ coincides with $n$ th iteration $\T^n$ in the point $\alpha$.

Following obvious property will be useful for the proof:
\begin{point}
\label{krit1} Let $A_1, \dots, A_s \subset \N$ such disjoint sets that $A_1\cup \dots \cup A_s =\N$ and
$$
\mu^\ast(A_1)+ \dots +\mu^\ast(A_s) \le 1.
$$
Then these sets are Buck measurable.
\end{point}

The proof of the following criterion is trivial, also:
\begin{point}
\label{krit2} A sequence $\{v(n)\}, v(n)\in [0,1]$ is Buck uniformly distributed if and only if for each $\varepsilon >0$
such disjoint intervals $I_1,\dots, I_s$ exist that
$$
I_1 \cup \dots \cup I_s =[0,1), \ |I_j| < \varepsilon,
$$
and $v^{-1}(I_j) \in \D_\mu$, $\mu(v^{-1}(I_j))= |I_j|$.
\end{point}

An important argument in the proof will consist of the following connection between the division of the unit interval and the digits of the Cantor expansion. Put
$$
I_{j}^{(r)}= \Big[\frac{j}{B_r}, \frac{j+1}{B_r}\Big), \ j=0, \dots, B_r -1.
$$
\begin{point} For each $j\in \{0, \dots B_{r}-1\}$ such finite sequence $b_0 \in \Z_{B_1}, \dots, b_{r-1} \in \Z_{m_r}$ exits, that
$\alpha \in I_{j}^{(r)}$ if and only if
$b_i(\alpha)=b_i, i=0, \dots, r-1$, under condition (c).
\end{point}
In this case we say that the sequence $b_0, \dots, b_{r-1}$
is {\it associated } to the interval $I_{j}^{(r)}$.
Unfortunately, we can not provide that for the values $\pi_i^n(b_i(\alpha))$ that fulfill the condition (c). From the equalities \ref{cantor}, \ref{expansion} we get

\begin{point} a)
$$
T^n(\alpha)\in I^{(r)}_j \Rightarrow \pi^n(b_i(\alpha)) = b_i,
i=0, \dots, r-1
$$
or  \newline b)
$$
T^n(\alpha)\in I^{(r)}_j \Rightarrow \pi^n(b_i(\alpha)) = b_i,
i=0, \dots, r-2,
$$
$$
 \pi^n(b_{m_r-1}(\alpha))=b_{m_r-1}+1,
b_i = m_i-1, i\ge r.
$$
\end{point}

We continue with some properties on permutations.
For each cyclic permutation $\pi=(0,a_1, \dots, a_{m-1})$ on the set $\Z_m$ and $r, s \in \Z_m$ such uniquely given $k\in \Z_m$ exists that
\begin{equation}
\pi^n(r)=s \Longleftrightarrow n \equiv k \pmod{m}
\end{equation}
for $n \in \N$. By an application of Chinese reminder theorem we get:
\begin{point}
\label{simult} Let $\pi_1, \dots, \pi_j$ be cyclic permutations on $\Z_{m_i}$ of length $m_i$. Then for each $r_i, s_i \in \Z_{m_i}$ such $k_j\in \Z_{B_j}$ exists that
$$
\forall i=1, \dots, j; \pi^n_i(r_i)=s_i \Leftrightarrow n \equiv k_j \pmod{B_j}.
$$
\end{point}
This can also be written in the form
\begin{equation}
\label{class}
\{n \in \N; \forall i=1, \dots, j; \pi^n_i(r_i)=s_i\}=
r_j+(B_j).
\end{equation}
This yields:
\begin{point}
\label{zerod}Let $m_i, i=1,2,3, \dots $ be an infinite sequence of relatively prime natural numbers. Suppose that
$r_i, s_i, i=1,2,3, \dots $ be a sequences where
$s_i, r_i \in \Z_{m_i}$. Denote $A$ the set of all natural
$n$ that $\pi_i^n(r_i)=s_i, i=1,2,3, \dots $. Then $A \in \D_\mu$ and $\mu(A)=0$.
\end{point}
\begin{point} For each $x \in [0,1]$ we have
$v^{-1}(\{x\}) \in \D_\mu$ and $\mu(v^{-1}(\{x\}))=0$.
\end{point}

{\bf Proof of Theorem \ref{iter}.} Let us consider interval
$I_j^{(k)}$ where $0\le j \le B_k$. Let $b_0, \dots, b_{k-1}$ be a sequence associated to this interval. Application of {\bf \ref{simult}} gives that such $r_k$ exists, that
$$
\pi^n_i(b_i(\alpha))=b_i , i=1, \dots, k \Leftrightarrow
n \in r_k +(B_k),
$$
for $n \in \N$. If $v(n)$ contains in its Cantor's expansion infinitely many summands with nominator $\pi^n_i(b_i(\alpha))$ not exceeding the value $m_i-2$ then
\begin{equation}
\label{prva}
v(n) \in I^{(k)}_j \Longrightarrow n \in r_k+(B_k).
\end{equation}
In the other case, we have
$$
v(n) \in I^{(k)}_j \Longrightarrow
$$
\begin{equation}
\label{extrem}
\Longrightarrow \pi^n_i(b_i(\alpha))=b_i, i<r, \hat{} \pi^n_r(b_i(\alpha))=b_r-1, \pi^n_r(b_i(\alpha))=m_{r+1}-1.
\end{equation}
Thus
$$
v^{-1}(I^{(k)}_j ) \subset  r_k+(B_k) \cup S,
$$
Where $S$ is the set of all $n$ fulfilling the condition
(\ref{extrem}). From {\bf\ref{zerod}} we get $S \in \D_\mu$
and $\mu(S)=0$.
Thus $\mu^\ast(v^{-1}(I^{(k)}_j )) \le \frac{1}{B_k}$ and so
$$
\sum_{j=0}^{B_k-1}\mu^\ast(v^{-1}(I^{(k)}_j ))\le 1.
$$
From {\bf \ref{krit1}} we obtain $v^{-1}(I^{(k)}_j )\in \D_\mu$ and $\mu(v^{-1}(I^{(k)}_j ))= \frac{1}{B_k}$. From {\bf \ref{krit2})} we get the assertion. \qed

We have defined $\T^{(n)}$ because
the expression (\ref{cantor}) is not uniquely determined. For this reason, $\T^n$ does not coincide with this mapping every time. From {\bf\ref{unicity}} we get that expression (\ref{cantor}) fulfills condition {\bf (c)} if it is expression of irrational number. Put
$$
\II = [0,1] \setminus \bigcup_{n=1}\T^{-n}(\Q).
$$
\begin{theorem} For $\alpha \in \II$ we have
$\T^n(\alpha)= \T^{(n)}(\alpha)$.
\end{theorem}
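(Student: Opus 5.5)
Induction on $n$ is the natural route. The aim is to show that for $\alpha\in\II$ every iterate $\T^k(\alpha)$, $k\ge 0$, is irrational and has Cantor digits $b_j(\T^k(\alpha))=\pi_j^k(b_j(\alpha))$. First I would pin down what $\II$ requires. The intended meaning is that $\T^k(\alpha)\notin\Q$ for all $k\ge 1$. I would also need $\alpha$ itself irrational, so I read the union as including $n=0$ (with $\T^{0}=\mathrm{id}$). If $n=0$ really is excluded, I would note that this case must be added, since for rational $\alpha$ the expansion (\ref{cantor}) is ambiguous and the digits used to define $\T(\alpha)$ need not match those used by $\T^{(1)}$.

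For the base case, $\alpha$ is irrational, so by {\bf\ref{unicity}} its expansion (\ref{cantor}) is the unique one and satisfies (c). Hence $\T(\alpha)=\sum_j \pi_j(b_j(\alpha))/B_{j+1}=\T^{(1)}(\alpha)$ directly from the definitions. For the inductive step, assume $\T^k(\alpha)=\T^{(k)}(\alpha)=\sum_j \pi_j^k(b_j(\alpha))/B_{j+1}$. Because $\alpha\in\II$, the number $\T^k(\alpha)$ is irrational. The displayed series is an expansion of $\T^k(\alpha)$ of the form (\ref{cantor}), since $\pi_j^k(b_j(\alpha))\in\Z_{m_{j+1}}$ (with the paper's index convention). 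By {\bf\ref{unicity}}, every expansion of an irrational number satisfies (c). Uniqueness of expansions under (c) then gives $b_j(\T^k(\alpha))=\pi_j^k(b_j(\alpha))$ for all $j$. Therefore
$$\T^{k+1}(\alpha)=\T(\T^k(\alpha))=\sum_j\frac{\pi_j(\pi_j^k(b_j(\alpha)))}{B_{j+1}}=\T^{(k+1)}(\alpha).$$

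The main obstacle is the identification of the digits of $\T^k(\alpha)$. This needs precisely that the digit sequence $(\pi_j^k(b_j(\alpha)))_j$ does not end in the maximal digits $m_{j+1}-1$; that is the only way two expansions can differ, by (\ref{expansion}). I would justify this carefully through irrationality. A sequence that is eventually maximal gives, by (\ref{expansion}), a finite sum $\sum_{j\le r} c_j/B_{j+1}$, which is rational. Irrationality of $\T^k(\alpha)$ excludes this, so the expansion satisfies (c) and coincides with the canonical one. This is exactly where the hypothesis $\T^{k}(\alpha)\notin\Q$ for every $k$ enters the proof.
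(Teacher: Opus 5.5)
Your inductive step is the paper's own (implicit) argument made explicit. The paper only invokes \textbf{\ref{unicity}} and the definition of $\II$. Together these say that for $k\ge 1$ the irrational number $\T^k(\alpha)=\sum_j \pi_j^k(b_j(\alpha))/B_{j+1}$ has this series as its canonical expansion, so the next application of $\T$ acts digitwise. Your justification via (\ref{expansion}), that an eventually maximal digit sequence sums to a rational, is the right one.

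What needs correcting is your claim that $\alpha$ itself must be irrational, so that the union defining $\II$ has to include $n=0$. That claim is false, and adopting it proves a weaker statement than the one given. The digits $b_j(\alpha)$ are by definition the unique digits satisfying (c), and $\T(\alpha)$ and $\T^{(1)}(\alpha)$ are built from exactly these digits. Hence $\T(\alpha)=\T^{(1)}(\alpha)$ for every $\alpha\in[0,1)$, rational or not. Equivalently, start the induction at $k=0$, where $b_j(\T^0(\alpha))=\pi_j^0(b_j(\alpha))$ holds trivially. The non-uniqueness in (\ref{cantor}) matters only when you read off the digits of $\T^k(\alpha)$, $k\ge 1$, from the formal series, and that is exactly what the hypothesis $\T^k(\alpha)\notin\Q$ for $k\ge1$ takes care of.

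The distinction is not vacuous, because $\II$ can contain rationals. Take $m_i$ to be the $i$-th prime and $\pi_i(x)=x+1 \bmod m_i$. The formal digit sequences of $\T^n(0)$, $n\ge 1$, are eventually equal to $n$, so they satisfy (c) and the digitwise formula is valid. These numbers are irrational: for a rational $a/b$ the fractional parts of $B_J x$ lie in $\frac1b\Z$, whereas here they are positive and tend to $0$. So $0\in\II$, but your version of the theorem excludes it. Drop the extra hypothesis and your proof covers the theorem as stated.
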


In the paper \cite{pss} mentioned above, the authors introduce the following type of mappings: \newline
A mapping $T:[0,1] \to [0,1]$ is {\it u. d.
prserving} if each sequence $\{T(v(n))\}$ is uniformly distributed modulo 1 for when the sequence $\{v(n)\}$ is uniform distributed modulo 1. The criterion proved in \cite{pss} provides that for a continuous mapping $[0,1] \to [0,1]$, it suffices if one uniformly distributed sequence transforms to a uniformly distributed sequence to be a uniform distribution preserving. Considering that $\{\T(\T^n(\alpha))
\}= \{\T^{n+1}(\alpha)\}$ we get

\begin{point} $\T$ is uniform distribution preserving.
\end{point}

\subsection{Monotonicity and Derivative of $T_{\boldsymbol{\pi}}$}
\begin{theorem}
\label{monot}The function $T_{\boldsymbol{\pi}}$ is not monotone in any subinterval of positive length.
\end{theorem}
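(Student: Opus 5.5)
The approach is to show that inside any interval of positive length there are finitely many points $x_0<x_1<\dots<x_{m-1}$ whose images under $T_{\boldsymbol{\pi}}$ are ordered exactly as $\pi_r(0),\dots,\pi_r(m-1)$ for a suitable index $r$. Monotonicity of $T_{\boldsymbol{\pi}}$ on the interval would then force $\pi_r$ to be order preserving or order reversing. A cyclic permutation of length $m\ge 3$ is neither, so this is a contradiction. As in Theorem~\ref{iter}, I assume the $\pi_i$ are cyclic of length $m_i$ and that $m_i\ge 2$. Since the $m_i$ are pairwise relatively prime, at most one of them equals $2$, so $m_i\ge 3$ for all large $i$.

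\textbf{Step 1: localisation.} Let $J\subset[0,1]$ have positive length. Since $1/B_r\to 0$, for all large $r$ some $I^{(r)}_j$ is contained in $J$. I fix such an $r$, large enough that $m_i\ge 3$ for every $i\ge r+1$. Let $b_0,\dots,b_{r-1}$ be the sequence associated to $I^{(r)}_j$.

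\textbf{Step 2: the test points.} For $i>r$ (digit $b_i$ has modulus $m_{i+1}\ge 3$), choose a digit $c_i\in\Z_{m_{i+1}}$ with $c_i\neq m_{i+1}-1$ and $\pi_i(c_i)\neq m_{i+1}-1$. This is possible because at most two values are excluded from a set of at least three. For $a\in\Z_{m_{r+1}}$, let $x_a$ be the number with Cantor digits $b_0,\dots,b_{r-1},a,c_{r+1},c_{r+2},\dots$.
\begin{itemize}
\item Condition (c) holds for this expansion, so it is the canonical one and $x_a\in I^{(r)}_j\subset J$.
\item The $x_a$ are strictly increasing in $a$.
\item By definition,
$$T_{\boldsymbol{\pi}}(x_a)=P+\frac{\pi_r(a)}{B_{r+1}}+R,\qquad R=\sum_{i>r}\frac{\pi_i(c_i)}{B_{i+1}},$$
where $P$ depends only on $b_0,\dots,b_{r-1}$.
\item By the equality (\ref{expansion}), and since $\pi_i(c_i)<m_{i+1}-1$ for all $i>r$, we get $0\le R<1/B_{r+1}$.
\end{itemize}
Hence $T_{\boldsymbol{\pi}}(x_a)<T_{\boldsymbol{\pi}}(x_{a'})$ if and only if $\pi_r(a)<\pi_r(a')$. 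In particular the values $T_{\boldsymbol{\pi}}(x_a)$ are pairwise distinct.

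\textbf{Step 3: the contradiction.} Suppose $T_{\boldsymbol{\pi}}$ is monotone, even non-strictly, on $J$. Because the values at the $x_a$ are distinct, $a\mapsto\pi_r(a)$ is then strictly increasing or strictly decreasing on $\Z_{m_{r+1}}$.
\begin{itemize}
\item If it is increasing, $\pi_r$ is the identity, which is not a cycle of length $m_{r+1}\ge 3$.
\item If it is decreasing, $\pi_r(a)=m_{r+1}-1-a$. This is an involution, so it has order at most $2$, whereas a cyclic permutation of length $m_{r+1}\ge3$ has order $m_{r+1}$.
\end{itemize}
Either way we have a contradiction.

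The main obstacle is Step 2: the non-uniqueness of Cantor expansions, and the fact that the permuted digits may fail (c), could make the tail $R$ reach $1/B_{r+1}$ and spoil the comparison. Choosing the tail digits $c_i$ so that both $c_i$ and $\pi_i(c_i)$ avoid the top digit handles both issues at once. This is exactly where $m_i\ge 3$ for large $i$ is used.
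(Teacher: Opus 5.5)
Your proof is correct and follows the same route as the paper. Both localize to a cell $I^{(r)}_j$, vary only the digit in position $r$ while keeping a common tail, and read off the order of the values of $T_{\boldsymbol{\pi}}$ from $\pi_r$. Since the tail is common it cancels exactly in differences, so your extra requirement $\pi_i(c_i)\neq m_{i+1}-1$ is harmless but unnecessary; the paper simply takes all later digits equal to $0$.

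Your Step 3 is more careful than the paper. The paper only uses that $\pi_{s+1}$ is not the identity, and so overlooks the order-reversing permutation, which is the only non-identity permutation when $m_{s+1}=2$; if all $\pi_i$ were reversals one would get $T_{\boldsymbol{\pi}}(\alpha)=1-\alpha$. You exclude this case correctly, using cyclicity together with $m_{r+1}\ge 3$.
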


{\bf Proof.} It suffices to prove the assertion for each
interval $I_s^{(j)}$, $s=1,2,3, \dots $, $j=0,\dots , B_s-1$.
The permutation $\pi_{s+1}$ is no identic and there exists such $k_1, k_2, k_3, k_4 \in \Z_{m_{s+1}}$ that
\begin{equation}
\label{nerov1}
k_1 < k_2 \land \pi_{s+1}(k_1)< \pi_{s+1}(k_2)
\end{equation}
and
\begin{equation}
\label{nerov2}
k_3 < k_4 \land \pi_{s+1}(k_3)> \pi_{s+1}(k_4).
\end{equation}
Put $\alpha_i=\frac{j}{B_s}+\frac{k_i}{B_{s+1}}$, $i=1, \dots, 4$. Then from (\ref{nerov1} we get
$$
\alpha_1 < \alpha_2 \land T_{\boldsymbol{\pi}}(\alpha_1) < (\alpha_2)
$$
and
$$
\alpha_3 < \alpha_4 \land T_{\boldsymbol{\pi}}(\alpha_3) > (\alpha_4).
$$
And so $\T$ is no non decreasing, no non increasing in $I_s^{(j)}$. \qed

Consider the expansion (\ref{cantor}) of given
$\alpha$. Put in this case $b_k(\alpha)=a_k$. Denote by $\alpha_s$ such number that
$a_k=b_k(\alpha_s), s \neq k$, and
$b_s(\alpha_s)= \ell_s$. Clearly
$$
\alpha- \alpha_s =\frac{a_s- \ell_s}{B_{s+1}}.
$$
And
$$
\T(\alpha)- \T(\alpha_s) =\frac{\pi_s(a_s)- \pi_s(\ell_s)}{B_{s+1}}.
$$
Therefore
\begin{equation}
\label{podiel}
\frac{\T(\alpha)- \T(\alpha_s)}{\alpha-\alpha_s}=
\frac{\pi_s(a_s)- \pi_s(\ell_s)}{a_s-\ell_s}.
\end{equation}
From this, we can conclude:
\begin{point}
\label{integer}If $T_{{\boldsymbol \pi}}$ has derivative in some
$\alpha \in [0,1]$ then $T_{{\boldsymbol \pi}}(\alpha)$ is integer and its value is $\pi_s(a_s)- \pi_s(a_s-1)$ for sufficiently large $s$ where $a_s \neq 0$,  or $\pi_s(0)- \pi_s(1)$ if for infinitely many $s$ we have $a_s=0$.
\end{point}
\begin{theorem} Each subinterval $I \subset [0,1]$ contains a point where $\T$ has no derivative.
\end{theorem}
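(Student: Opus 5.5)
We reduce the statement to the case of a cylinder interval and then build one explicit point in it where the difference quotients disagree. Since every subinterval $I\subset[0,1]$ of positive length contains some $I^{(s)}_j$ with $s$ large, it suffices to find, for each $s$ and $j$, a point $\alpha\in I^{(s)}_j$ at which $\T$ is not differentiable. The tool is the identity (\ref{podiel}). For $\alpha$ with digits $a_k=b_k(\alpha)$, changing a single digit $a_t$ into $\ell_t$ gives a point $\alpha_t$ with $|\alpha-\alpha_t|\le 1/B_t\to 0$. The corresponding difference quotient of $\T$ equals $\frac{\pi_t(a_t)-\pi_t(\ell_t)}{a_t-\ell_t}$. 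If $\T'(\alpha)$ exists, then every such quotient taken along a sequence $t\to\infty$ must converge to $\T'(\alpha)$; this is the reasoning behind {\bf\ref{integer}}. So we need to choose the digits of $\alpha$ so that these quotients have two different limit points.

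We construct $\alpha$ as follows.
\begin{itemize}
\item The first $s$ digits $a_0,\dots,a_{s-1}$ are the ones associated to $I^{(s)}_j$.
\item For each $t\ge s$ we pick $a_t$, and the alternative digit $\ell_t$, according to the permutation $\pi_t$.
\end{itemize}
Since $\pi_t$ is a nonidentical permutation (in Theorem \ref{monot} this was used for cyclic $\pi_t$ of length $m_t\ge 2$), the argument of Theorem \ref{monot} gives pairs $k_1<k_2$ with $\pi_t(k_1)<\pi_t(k_2)$ and $k_3<k_4$ with $\pi_t(k_3)>\pi_t(k_4)$. We then choose:
\begin{itemize}
\item for even $t$, $a_t=k_2$ and $\ell_t=k_1$, so the quotient $q_t=\frac{\pi_t(k_2)-\pi_t(k_1)}{k_2-k_1}$ is strictly positive;
\item for odd $t$, $a_t=k_4$ and $\ell_t=k_3$, so the quotient $q_t$ is strictly negative.
\end{itemize}
Then $\alpha_t\to\alpha$ along both the even and the odd indices. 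Differentiability at $\alpha$ would force $\T'(\alpha)\ge0$ from the even subsequence and $\T'(\alpha)\le0$ from the odd one, hence $\T'(\alpha)=0$.

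The main obstacle is excluding the value $0$, because the quotients might tend to $0$ along both subsequences when $m_t$ grows. To handle this we use integrality: every quotient $q_t$ is a rational number whose numerator is a nonzero integer. We refine the choice so that $|q_t|\ge 1/m_t$ is not the only bound available. Concretely, for a cyclic permutation of $\Z_{m_t}$ with $m_t\ge 3$, one can choose a \emph{consecutive} pair $k,k+1$ with $\pi_t(k+1)-\pi_t(k)\ge 1$, and another consecutive pair with $\pi_t(k'+1)-\pi_t(k')\le -1$. Both exist because the sequence $\pi_t(0),\dots,\pi_t(m_t-1)$ is a nonmonotone rearrangement of $\Z_{m_t}$. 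With consecutive pairs, $q_t$ is a nonzero integer, so $q_t\ge1$ for even $t$ and $q_t\le -1$ for odd $t$. The two limits then cannot coincide, and the derivative does not exist.

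Some technical points remain to be checked.
\begin{itemize}
\item If some $m_t=2$, there is only one non-identity permutation of $\Z_2$. Its quotient for the pair $0,1$ is always $-1$, and since the $m_i$ are pairwise relatively prime this happens for at most one $t$, which we simply skip.
\item Condition (c) must hold for $\alpha$, so that (\ref{podiel}) is legitimate. We choose $a_t\ne m_{t+1}-1$ infinitely often, which is possible because we may use $\ell_t$ as the digit of $\alpha$ instead when needed; if necessary we interleave the choices with irrelevant indices.
\item The perturbed points $\alpha_t$ must stay inside $I^{(s)}_j$. They do, since $t\ge s$ leaves the first $s$ digits untouched.
\end{itemize}
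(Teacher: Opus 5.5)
Your proof is correct, but it takes a different route from the paper's.

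\textbf{The paper's argument.} It argues by contradiction. If $\T$ were differentiable at every point of some $[x_1,x_2]$, then by point~\textbf{\ref{integer}} the derivative $\T'$ would be integer-valued there. Darboux's theorem would then force $\T'$ to be constant, so $\T$ would be affine and hence monotone on $[x_1,x_2]$, contradicting Theorem~\ref{monot}.

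\textbf{Your argument.} You work directly in a cylinder $I^{(s)}_j$ and build an explicit point $\alpha$. Along the one-digit perturbations $\alpha_t\to\alpha$, the difference quotients are integers $\ge 1$ for infinitely many $t$ and $\le -1$ for infinitely many $t$. Your use of consecutive pairs $k,k+1$ to make the quotient an integer is the same mechanism as point~\textbf{\ref{integer}}. The difference is that you apply it at a single point, and you need neither Darboux's theorem nor Theorem~\ref{monot}.

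\textbf{What each route buys.} The paper's proof is shorter given the results already established, but it is purely existential. Yours is self-contained and gives more. Because $\frac{\pi_t(a_t)-\pi_t(\ell_t)}{a_t-\ell_t}$ is symmetric in $a_t,\ell_t$, you can always take $a_t$ to be the smaller element of the consecutive pair. This has three consequences:
\begin{itemize}
\item Condition (c) then holds automatically at every index, so no interleaving is needed.
\item Every $\alpha_t$ lies to the right of $\alpha$, so even the right derivative fails to exist.
\item You are free to decide which indices carry an ascent and which a descent. This produces uncountably many non-differentiability points in every interval.
\end{itemize}

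\textbf{Two small touch-ups.}
\begin{itemize}
\item Your parenthetical claim that the pairs from Theorem~\ref{monot} exist for cyclic $\pi_t$ with $m_t\ge 2$ is false when $m_t=2$, since there is then no increasing pair. You do handle this case later.
\item The existence of a consecutive ascent needs a one-line justification: an $m$-cycle with $m\ge 3$ is not an involution, so it cannot be the reversal $k\mapsto m-1-k$.
\end{itemize}
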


{\bf Proof.} Suppose that $\T$ has derivative in each point
of some interval $[x_1, x_2] \subset [0,1], X_1 < x_2$. If
$\T'(x_1) \neq \T'(x_2)$ then Darboux theorem provides that
 $\T'$ reach each value between $ \T'(x_1)$ and $\T'(x_2)$.  We get a contradiction with {\bf{\ref{integer}}}. Thus
$\T'(x_1)= \T'(x_2)$.  And so we have that $\T'$ is constant in each closed interval, thus it is monotone - a contradiction
with Theorem \ref{monot}. \qed
\begin{point}
\label{jedna} If such infinite set $S$ exists, that for each
sequence of $a_s, s \in S$ such sequence of $\ell_s, s \in S$ can be selected that the term of right hand side of (\ref{podiel}) converges to $1$ then $T_{{\boldsymbol \pi}}$ has not derivative in any point of $[0,1]$.
\end{point}
{\bf Proof.} Suppose that $\T$ has a derivative. Then
$\T'(\alpha)=1$ for $\alpha \in [0,1]$. Thus
$\T(x)=x$, $x \in [0,1]$ - a contradiction. \qed
\begin{point} The condition of {\bf \ref{jedna}} are fulfilled
if for infinitely many $s$ we have $\pi_s=(0123\dots m_s-1)$.
\end{point}

Department of Mathematics and Informatics, Faculty of Education, University of Trnava \newline
and \newline
Institute of Mathematics, Slovak Academia of Sciences

\centerline{SLOVAKIA}

\end{document}